\newtheorem{Def}{Def}[section]
\newtheorem{Defi}[Def]{Definition}
\newtheorem{Them}[Def]{Theorem}
\newtheorem{Lem}[Def]{Lemma}
\newtheorem{Cor}[Def]{Corollary}
\newtheorem{Prop}[Def]{Proposition}
\numberwithin{equation}{section}
\newcommand{\Hom}{\operatorname{Hom}}
\newcommand{\GL}{\operatorname{GL}}
\newcommand{\NB}{\mathbb{N}}
\newcommand{\RB}{\mathbb{R}}
\title%[Coxeter arrangements]
{A canonical system of basic invariants of a finite reflection group}
\author{Norihiro Nakashima\footnote{email: naka\_n@math.sci.hokudai.ac.jp}
 and Shuhei Tsujie\footnote{email: tsujie@math.sci.hokudai.ac.jp}}
\date{}
\begin{document}

\maketitle

\begin{abstract}
A canonical system of basic invariants is a system of invariants 
satisfying a set of differential equations. 
The properties of a canonical 
system are related to the mean value property for polytopes. 
In this article, we naturally identify the vector space spanned 
by a canonical system of basic invariants with an invariant 
space determined by a fundamental antiinvariant. 
From this identification, we obtain explicit formulas of 
canonical systems of basic invariants. 
The construction of the formulas does not depend on the 
classification of finite irreducible reflection groups. 
\vspace{5mm}
\\
{\bf Key Words:} basic invariants, invariant theory, 
finite reflection groups.
\vspace{2mm}
\\
{\bf 2010 Mathematics Subject Classification:}
 Primary 13A50; Secondary 20F55.
\end{abstract}

\section{Introduction}
Let $V$ be a real $n$-dimensional Euclidean space, 
and $W\subseteq O(V)$ a finite reflection group. 
Let $S$ denote the symmetric algebra $S(V^{\ast})$ of the 
dual space $V^{\ast}$, and $S_{k}$ the vector space 
consisting of homogeneous polynomials of degree $k$ and the zero polynomial. 
Then $W$ naturally acts on $S$ and $S_{k}$. 
According to Chevalley \cite{Che}, the subalgebra $R=S^{W}$ 
of $W$-invariant polynomials of $S$ is generated by 
$n$ algebraically independent homogeneous polynomials. 
A system of such generators is called a system of basic invariants of $R$. 
It is easy to construct systems of basic invariants 
for reflection groups of the types $A_{n}$, $B_{n}$, $D_{n}$ and $I_{2}$. 
Many researchers constructed explicit systems of basic invariants 
for a reflection group of each type 
(Coxeter \cite{Cox}, Mehta \cite{Mehta}, 
Saito, Yano, and Sekiguchi \cite{SYS}, and Sekiguchi and Yano \cite{SY1,SY2}).

Let $v_{1},\dots,v_{n}$ be an orthonormal basis for $V$, 
$x_{1},\dots,x_{n}$ the basis for $V^{\ast}$ 
dual to $v_{1},\dots,v_{n}$, 
and $\partial_{1},\dots,\partial_{n}$ the basis for 
$V^{\ast\ast}$ dual to $x_{1},\dots,x_{n}$. 
Although we may identify $V^{\ast\ast}$ with $V$ naturally, 
we usually distinguish the basis 
$\partial_{1},\dots,\partial_{n}$ from the basis $v_{1},\dots,v_{n}$. 
We define a bilinear map $\left(\cdot,\cdot \right):S\times S\rightarrow S$ by 
\begin{align}
\left(f,g \right)=f(\partial)g(x) \qquad (f,g\in S),\label{bil}
\end{align}
where $x=(x_{1},\dots,x_{n})$ and 
$\partial=(\partial_{1},\dots,\partial_{n})$. 
Define an inner product 
$\langle\cdot,\cdot\rangle:S\times S\rightarrow \RB$ by 
\begin{align}
\langle f,g \rangle = f(\partial)g(x)|_{x=0}\qquad (f,g\in S).\label{inner}
\end{align}
It is not hard to see that $ \langle f,g \rangle = \langle g,f \rangle $ for $ f,g \in S $.

Two systems $g_{1},\dots,g_{n}$ and $h_{1},\dots,h_{n}$ 
of basic invariants are said to be equivalent if 
there exists $A\in\GL_{n}(\RB)$ such that 
$$
[h_{1},\dots,h_{n}]=[g_{1},\dots,g_{n}]A.
$$
\begin{Defi}\label{cano-def}
A system $f_{1},\dots,f_{n}$ of basic invariants 
is said to be {\bf canonical} if it satisfies the following 
system of partial differential equations: 
\begin{align}
\left(f_{i},f_{j} \right)=
\begin{cases}
1&{\rm if}\ i=j\\
0&{\rm otherwise}
\end{cases}\label{def-cbi}
\end{align}
for $i,j=1,\dots,n$. 
\end{Defi}

%A canonical system is an orthogonal system. 
The canonical system was first defined by 
Flatto \cite{Fla1,Fla2} and Flatto and Wiener \cite{Fla-Wie} 
for determining the structure of the linear space 
consisting of $P(0)$-harmonic functions, where 
$P(k)$ is the $k$-skeleton of an $n$-dimensional polytope $P$ 
for a nonnegative integer $0\leq k\leq n-1$. 
(An $\RB$-valued continuous function is called 
a $P(k)$-harmonic function if it satisfies 
the mean value property on $P(k)$.)
Flatto \cite{Fla1,Fla2} and Flatto and Wiener \cite{Fla-Wie} 
verified that there exists a unique (up to equivalence) 
canonical system of basic invariants for any finite reflection group, 
and gave an algorithm to find the canonical system. 
In \cite{Fla1,Fla2,Fla-Wie}, a canonical system 
of basic invariants was found as a solution of a certain 
system of partial differential equations. 
Definition \ref{cano-def} is due to Iwasaki \cite{Iwa}.

Explicit formulas for canonical systems play an 
important role when we determine the structure of the linear space 
consisting of $P(k)$-harmonic functions. Especially, 
from the argument using explicit formulas of the canonical systems, 
the structure of the linear space consisting of $P(k)$-harmonic functions 
was determined for any $0\leq k\leq n-1$ 
when $P$ is a regular convex polytope (see \cite{Iwa2,Iwa3,IKM}). 
(When $P$ is a regular convex polytope, the symmetry group 
of $P$ is a finite reflection group of the type $A$, $B$, $F$, $H$, or $I$.) 

However, the algorithm for constructing a canonical system 
(given by Flatto \cite{Fla1,Fla2} and Flatto and Wiener \cite{Fla-Wie}) 
does not seem to be effective in practice. 
(It is hard to give an explicit formula of a canonical system 
from the algorithm.) 
It is an interesting problem to determine canonical systems. 
Iwasaki \cite{Iwa} gave explicit formulas of canonical systems 
for reflection groups of the types $A_{n}$, $B_{n}$, 
$D_{n}$ and $I_{2}$. Iwasaki, Kenma and Matsumoto \cite{IKM} gave 
explicit formulas of canonical systems for 
the irreducible finite reflection groups 
of the types $F_{4}$, $H_{3}$ and $H_{4}$. 
The problem for determining canonical systems of basic invariants 
for the remaining types ($E_{6}$, $E_{7}$ and $E_{8}$) has been open. 
In this article, we explicitly construct canonical systems of 
basic invariants from an arbitrary system of basic invariants
(Theorem \ref{const-can-sys}). 
The construction does not depend on the classification for 
finite irreducible reflection groups. 

In contrast, making use of the classification, we may refine our construction.
In the case of the types other than $D_{n}$ (with even $n$), 
we may have a straightforward construction
(Theorem \ref{can-sys1}). 
We need to consider the case of the type $D_{n}$ with even $n$ separately. 
Iwasaki \cite{Iwa} constructed a canonical system of 
basic invariants containing the monomial 
$\prod_{i=1}^n x_{i}=x_{1}\cdots x_{n}$. 
By using the monomial $\prod_{i=1}^n x_{i}$, 
we obtain a construction for a canonical system arising from an arbitrary 
system of basic invariants (Theorem \ref{can-sys2}). 

Let $\Phi$ be the root system associated with a finite reflection group $W$, 
and $\Phi^{+}$ a positive system in the sense of (5.4) in \cite{Hum}. 
For $\alpha\in\Phi$, fix a homogeneous polynomial $L_{\alpha}$ 
of degree $1$ defining the reflecting hyperplane $H_{\alpha}$ 
(i.e., $\ker L_{\alpha}=H_{\alpha}$). 
A polynomial $f\in S$ is said to be an antiinvariant if 
$wf=(\det w)f$ for all $w\in W$. 
Put $\Delta=\prod_{\alpha\in\Phi^{+}}L_{\alpha}$, then 
$\Delta$ is an antiinvariant. Set 
$$
\RB[\partial]\Delta:=\left\{f(\partial)\Delta\mid f\in S\right\}.
$$
We naturally identify the vector space spanned by a canonical system 
of basic invariants with $\Omega_{W} := (\RB[\partial]\Delta\otimes_{\RB} V^{\ast})^W$. 
%To determine canonical systems, we investigate $\RB[\partial]\Delta$. 
It is known that the graded vector space $\RB[\partial]\Delta$ 
affords the regular representation of $W$ 
(see Bourbaki \cite{Bou} and Steinberg \cite{Ste}). 
This is a key to constructing canonical systems. 
\section{Characterization of the canonical systems}

Let $R_{+}$ be the ideal of $R$ generated by homogeneous elements of 
positive degrees, and $I=SR_{+}$ the ideal of $S$ generated by $R_{+}$. 
The following key lemma is obtained by Steinberg \cite{Ste}. 
\begin{Lem}\label{Stein1}
Let $f\in S$ be a homogeneous polynomial. Then we have the following:
\begin{enumerate}
\item[$(1)$] $f\in I$ if and only if $f(\partial)\Delta =0$,
\item[$(2)$] $g(\partial)f=0$ for any $g\in I$ if and only if 
$f\in\RB[\partial]\Delta$, 
\item[$(3)$] $I$ is the orthogonal complement of 
$\RB[\partial]\Delta$, and $S=I\oplus\RB[\partial]\Delta$. 
\end{enumerate}
\end{Lem}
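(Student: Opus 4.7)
The plan is to prove the three parts in a non-linear order: first the forward direction of (1) directly, then use it to establish orthogonality of $I$ and $\RB[\partial]\Delta$, combine this with a dimensional count to deduce (3), and finally obtain the converse of (1) and both directions of (2) as formal consequences. The main tools are the adjunction identity $\langle fh,g\rangle=\langle f,h(\partial)g\rangle$ (immediate from $(fh)(\partial)=f(\partial)h(\partial)$ followed by evaluation at $x=0$) together with the positive-definiteness of $\langle\cdot,\cdot\rangle$ on each $S_{k}$, which is visible from the monomial formula $\langle x^{a},x^{b}\rangle=a_{1}!\cdots a_{n}!\,\delta_{a,b}$.

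For the forward direction of (1), write $f=\sum_{i}h_{i}r_{i}$ with $r_{i}\in R_{+}$ homogeneous. Because $r_{i}$ is $W$-invariant, the constant-coefficient differential operator $r_{i}(\partial)$ commutes with the $W$-action on $S$, so each $r_{i}(\partial)\Delta$ is a $W$-antiinvariant of degree $\deg\Delta-\deg r_{i}<N:=|\Phi^{+}|$. Since the antiinvariants form the free $R$-module $R\Delta$, any nonzero antiinvariant has degree at least $N$, and therefore $r_{i}(\partial)\Delta=0$ and $f(\partial)\Delta=\sum_{i}h_{i}(\partial)r_{i}(\partial)\Delta=0$. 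The orthogonality $I_{k}\perp(\RB[\partial]\Delta)_{k}$ in each $S_{k}$ then falls out at once: for $f=\sum h_{i}r_{i}\in I$ and $g=p(\partial)\Delta\in\RB[\partial]\Delta$, adjunction and commutativity of constant-coefficient differential operators give
\[
\langle h_{i}r_{i},p(\partial)\Delta\rangle=\langle h_{i},r_{i}(\partial)p(\partial)\Delta\rangle=\langle h_{i},p(\partial)r_{i}(\partial)\Delta\rangle=0.
\]

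At this stage I would invoke the two dimensional inputs already available from the representation theory cited in the introduction: Chevalley's theorem gives $\dim(S/I)_{k}=\dim H_{k}$ for the coinvariant algebra $H$, and Steinberg's identification of $\RB[\partial]\Delta$ with $H$ as graded $W$-modules supplies $\dim(\RB[\partial]\Delta)_{k}=\dim H_{k}$. Combined, $\dim I_{k}+\dim(\RB[\partial]\Delta)_{k}=\dim S_{k}$, and positive-definiteness upgrades the orthogonality to the decomposition $S_{k}=I_{k}\oplus(\RB[\partial]\Delta)_{k}$ with each summand the orthogonal complement of the other, which assembles into (3). The converse of (1) is then one line: if $f(\partial)\Delta=0$ then $\langle f,h(\partial)\Delta\rangle=h(\partial)f(\partial)\Delta|_{x=0}=0$ for every $h\in S$, so $f\in(\RB[\partial]\Delta)^{\perp}=I$ by (3). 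For (2), setting $x=0$ in the hypothesis gives $\langle g,f\rangle=0$ for every $g\in I$, hence $f\in I^{\perp}=\RB[\partial]\Delta$; conversely, if $f=p(\partial)\Delta$ and $g\in I$ then (1) combined with commutativity yields $g(\partial)f=p(\partial)g(\partial)\Delta=0$.

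The main obstacle is concentrated entirely in the two dimensional inputs (Chevalley's theorem on the coinvariant algebra and Steinberg's theorem that $\RB[\partial]\Delta$ carries the regular representation with the same Poincar\'{e} series as $S/I$); once these are in hand, every remaining step is a formal manipulation with the adjoint pairing $\langle\cdot,\cdot\rangle$ and the commutativity of constant-coefficient differential operators. In effect, this lemma repackages the Bourbaki--Steinberg material flagged in the introduction into the harmonic-analysis framework that the rest of the paper uses.
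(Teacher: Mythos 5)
The paper does not actually prove this lemma; it attributes it wholesale to Steinberg \cite{Ste} and moves on. Measured against that, your reconstruction is sound and, if anything, more informative than the source: the adjunction $\langle fh,g\rangle=\langle f,h(\partial)g\rangle$, the positive-definiteness of $\langle\cdot,\cdot\rangle$ on each $S_k$, the observation that $r(\partial)\Delta$ is an antiinvariant of degree $<\deg\Delta$ hence zero for $r\in R_+$ homogeneous, and the way (1)$\Rightarrow$orthogonality$\Rightarrow$(3)$\Rightarrow$converse of (1) and (2) are chained together are all correct, and you correctly identify that everything except one input is formal.

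The one caveat is that the input you isolate is not quite an independent ingredient. The ``identification of $\RB[\partial]\Delta$ with the coinvariant algebra as graded $W$-modules'' that you invoke for $\dim(\RB[\partial]\Delta)_k=\dim(S/I)_k$ is, in Steinberg's paper and in Bourbaki, a \emph{corollary} of the decomposition $S=I\oplus\RB[\partial]\Delta$ (equivalently of the statement that every $W$-harmonic polynomial is a derivative of $\Delta$), not a theorem proved prior to it. So read as a standalone proof your argument is a reduction of the lemma to a statement essentially equivalent to it; what is genuinely missing is the hard inclusion $I^\perp\subseteq\RB[\partial]\Delta$, i.e.\ an independent lower bound on $\dim(\RB[\partial]\Delta)_k$. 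Since the paper itself supplies nothing beyond the citation, this does not put you below the paper's standard, but you should either cite Steinberg's theorem in that form (every harmonic polynomial lies in $\RB[\partial]\Delta$) or be explicit that the dimension count is exactly the content being outsourced.
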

It is known that a $W$-stable graded subspace $U$ of $S$ such that 
$S=I\oplus U$ is isomorphic to the regular representation 
(see Bourbaki \cite[Chap. 5 Sect. 5 Theorem 2]{Bou}). 
By Lemma \ref{Stein1}, the graded vector space $\RB[\partial]\Delta$ 
affords the regular representation of $W$. 

In the rest of this paper, we assume that 
$W$ is irreducible and $V$ is generated by the roots. 
Then any endomorphism of $V$ is a multiplicative map 
with a constant in $\RB$, and $V$ is absolutely irreducible 
(see Bourbaki \cite[Chap. 5, Sect. 2, Proposition 1]{Bou} 
and \cite[Chap. 5, Sect. 3, Proposition 5]{Bou}). 
Therefore, by the general theory of group representations, we have 
that the multiplicity of $V$ in the regular representation 
is $\dim_{\RB} V=n$. Note that there exists a $W$-module $U^{\prime}$ 
such that $\RB[\partial]\Delta=V^{n}\oplus U^{\prime}$. 

We set
\begin{align*}
\Omega_{W} := \left( \mathbb{R}[\partial]\Delta \otimes_{\mathbb{R}}V^{\ast}\right)^{W}. 
\end{align*}
Then, according to Orlik and Solomon \cite{Or-Solo}, 
we have the isomorphism 
\begin{align}
\Omega_{W} = (\RB[\partial]\Delta\otimes_{\RB} V^{\ast})^W
\simeq\Hom_{W}(V,\RB[\partial]\Delta)\label{Hom-Tensor}
\end{align}
as $W$-modules. Since $V$ is absolutely irreducible, 
we have $ \dim_{\mathbb{R}} \Omega_{W} = n $ by \cite{Or-Solo} (pp. 80). 

There exists a unique $ \mathbb{R} $-linear map 
$d\,:\,S\longrightarrow S\otimes_{\RB}V^{\ast}$ satisfying 
$d(fg)=f(dg)+g(df)$ for $f,g\in S$ and $dL := 1 \otimes L \in\RB\otimes_{\RB}V^{\ast}$ 
for $L\in V^{\ast}$. The map $d$ is called the differential map. 
We set $\Omega^1(V):=S\otimes_{\RB}V^{\ast}$, then $\Omega^1(V)$ 
naturally has an $S$-graded structure. 
The differential map was associated with basic invariants 
by Solomon \cite{Sol1}. 
For $h\in S$, the differential $dh$ is given by the 
following formula 
$$
dh=\sum_{j=1}^n \partial_{j}h\otimes x_{j}=
\sum_{j=1}^n (\partial_{j}h) dx_{j}.
$$
Hence $dh$ is invariant if $h$ is invariant. 
Assume that $ df = 0 $ for $ f \in R_{+}$. 
Then $ f $ is a constant, and $ f = 0 $. 
Hence $ d|_{R_{+}} $ is injective. 

Recall that there always exists a homogeneous canonical system 
$\{f_{1},\dots,f_{n}\}$ by Flatto \cite{Fla1,Fla2} and 
Flatto and Wiener \cite{Fla-Wie}.
\begin{Lem}\label{basis-Omega}
For a homogeneous canonical system $\{f_{1},\dots,f_{n}\}$, 
the $1$-forms $df_{1},\dots,df_{n}$ are a basis for $\Omega_{W}$ 
over $\mathbb{R}$.
\end{Lem}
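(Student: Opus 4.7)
The strategy is to verify two things: (i) each $df_i$ lies in $\Omega_W$; (ii) the $n$ forms $df_1,\dots,df_n$ are linearly independent over $\RB$. Since the excerpt has already established $\dim_{\RB}\Omega_W = n$, these two facts together force $df_1,\dots,df_n$ to be a basis.

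For (i), the $W$-invariance of $df_i$ is immediate from the $W$-invariance of $f_i$ together with $w\circ d = d\circ w$. The substantive task is to show that $df_i$ lies in $\RB[\partial]\Delta\otimes_{\RB}V^{\ast}$, or equivalently, that each coefficient $\partial_j f_i$ belongs to $\RB[\partial]\Delta$. I would apply Lemma \ref{Stein1}(2), which reduces the question to showing $g(\partial)\,\partial_j f_i = 0$ for every $g\in I$. Since $I = SR_{+} = S\cdot(f_1,\dots,f_n)$ by Chevalley's theorem, it suffices to treat the generating case $g = h f_k$ with $h\in S$. Using the substitution homomorphism $g(\partial) = h(\partial)f_k(\partial)$ and the commutativity of constant-coefficient partials, one then computes $g(\partial)\,\partial_j f_i = h(\partial)\,\partial_j\bigl(f_k(\partial)\,f_i\bigr) = h(\partial)\,\partial_j (f_k,f_i) = h(\partial)\,\partial_j\,\delta_{ki} = 0$, where the crucial simplification $(f_k,f_i) = \delta_{ki}$ is exactly the canonical condition \eqref{def-cbi}, and the outer partial then kills the resulting scalar. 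This is the single place where the canonical property is used in an essential way.

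For (ii), a linear relation $\sum_i c_i\,df_i = 0$ with $c_i\in\RB$ gives $d\bigl(\sum_i c_i f_i\bigr) = 0$. Since each $f_i$ is homogeneous of positive degree, the sum lies in $R_{+}$, so the injectivity of $d|_{R_{+}}$ already noted in the excerpt forces $\sum_i c_i f_i = 0$; algebraic independence of the basic invariants then yields $c_i = 0$ for every $i$.

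The real conceptual step is recognizing that membership of $df_i$ in $\Omega_W$ can be tested coefficient-wise via Steinberg's characterization in Lemma \ref{Stein1}(2), and that the vanishing condition one obtains matches exactly what the canonical identities $(f_i,f_j) = \delta_{ij}$ provide after one extra application of a partial. Once this dictionary is set up, both the membership and the linear-independence arguments are short and automatic; no step in the proof is genuinely hard.
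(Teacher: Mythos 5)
Your proposal is correct and follows essentially the same route as the paper: membership of $df_i$ in $\Omega_W$ is checked coefficient-wise via Lemma \ref{Stein1}(2) by differentiating the canonical identities $(f_j,f_i)=\delta_{ij}$, and the conclusion follows from linear independence together with $\dim_{\RB}\Omega_W=n$. Your write-up is in fact slightly more complete than the paper's, since you spell out the reduction from $g\in I$ to the generators $hf_k$ and justify the linear independence via the injectivity of $d|_{R_+}$, both of which the paper leaves implicit.
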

\begin{proof}
Let $\{i,j,k\}\subseteq \{1,\dots,n\}$. One has
$$
0=\partial_{k}(f_{j},f_{i})=\partial_{k}(f_{j}(\partial)f_{i})
=f_{j}(\partial)(\partial_{k}f_{i}).
$$
Thus $g(\partial)(\partial_{k}f_{i})=0$ for any $g\in R_{+}$. 
By Lemma \ref{Stein1} $(2)$, we have 
$\partial_{k}f_{i}\in\mathbb{R}[\partial]\Delta$ and $df_{i}\in\Omega_{W}$. 
Note that the $1$-forms $df_{1},\dots,df_{n}$ are 
linearly independent over $\mathbb{R}$. Since $\dim_{\mathbb{R}}\Omega_{W}=n$, 
the $1$-forms $df_{1},\dots,df_{n}$ are a basis for $\Omega_{W}$ 
over $\mathbb{R}$.
\end{proof}
Define the linear map 
$$
\varepsilon\,:\,(S\otimes_{\RB}V^{\ast})^W\longrightarrow R_{+}
$$
by 
$$
\varepsilon\left(\sum_{k=1}^n h_{k}dx_{k}\right)=\sum_{k=1}^n x_{k}h_{k}.
$$
Then $\varepsilon(dh)=(\deg h)h$ for any homogeneous polynomial $h\in R_{+}$. 
Define
\begin{align}
\mathcal{F}:=\varepsilon(\Omega_{W}).
\end{align}
By Lemma \ref{basis-Omega}, 
$\mathcal{F}=\langle f_{1},\dots,f_{n}\rangle_{\mathbb{R}}$ 
when $\{f_{1},\dots,f_{n}\}$ is a homogeneous canonical system. 
We thus have the following two $W$-isomorphisms:
\begin{align*}
d|_{\mathcal{F}}&:\mathcal{F}\overset{\sim}{\rightarrow}\Omega_{W},\\
\varepsilon|_{\Omega_{W}}&:\Omega_{W}
\overset{\sim}{\rightarrow}\mathcal{F}.
\end{align*}
In particular, an arbitrary element of $\Omega_{W}$ 
can be uniquely expressed as $dg$ for some $g\in\mathcal{F}$.

We now introduce an inner product
$$
(\cdot,\cdot):\Omega_{W}\times\Omega_{W}\rightarrow\mathbb{R}
$$
by
\begin{align}\label{Inner product on Omega_{W}}
(\omega_{1},\omega_{2})=\left(\sum_{j=1}^{n}g_{j}dx_{j},
\sum_{j=1}^{n}h_{j}dx_{j} \right)
:=\sum_{j=1}^{n}\langle g_{j},h_{j}\rangle, 
\end{align}
for $\omega_{1}=\sum_{j=1}^{n}g_{j}dx_{j},\ 
\omega_{2}= \sum_{j=1}^{n}h_{j}dx_{j}\in\Omega_{W}$. 
For homogeneous polynomials $f,g\in \mathcal{F}$, we have 
\begin{align}%\label{df,dg=f,g}
(df,dg)&=\sum_{j=1}^{n}(\partial_{j}f,\partial_{j}g)
=\sum_{j=1}^{n}\langle x_{j}\partial_{j}f,g\rangle\notag \\ 
&=(\deg f)\langle f,g\rangle=
\begin{cases}
(\deg f)(f,g)\ &{\rm if}\ \deg f=\deg g,\\
0&{\rm otherwise}.
\end{cases}
\end{align}
Therefore two $W$-isomorphisms
\begin{align*}
d|_{\mathcal{F}}&:\mathcal{F}\overset{\sim}{\rightarrow}\Omega_{W},\\
\varepsilon|_{\Omega_{W}}&:\Omega_{W}
\overset{\sim}{\rightarrow}\mathcal{F}
\end{align*}
both preserve the orthogonality of homogeneous elements.
\begin{Cor}
Let $\{\omega_{1},\dots,\omega_{n}\}$ be an orthogonal basis 
consisting of homogeneous elements for $ \Omega_{W} $. 
Then the normalization of 
$ \varepsilon(\omega_{1}),\dots,\varepsilon(\omega_{n}) $ 
forms a canonical system. 
\end{Cor}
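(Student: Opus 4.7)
The strategy is to define $g_i := \varepsilon(\omega_i)$ for $i = 1,\dots,n$ and verify that $\{g_1,\dots,g_n\}$ is a homogeneous system of basic invariants satisfying $(g_i,g_j) = c_i \delta_{ij}$ with $c_i > 0$; the normalization then produces the canonical system.

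First I would confirm that $\{g_i\}$ is a system of basic invariants. Since $\varepsilon|_{\Omega_W}\colon \Omega_W \to \mathcal{F}$ is a $W$-isomorphism preserving homogeneity, the $g_i$ form a homogeneous basis of $\mathcal{F} = \langle f_1,\dots,f_n\rangle_{\mathbb{R}}$, where $\{f_k\}$ is a homogeneous canonical system (guaranteed by Flatto and Wiener). Any invertible $\mathbb{R}$-linear change of basis among homogeneous basic invariants preserves algebraic independence and generation of $R$, so $\{g_i\}$ is again a system of basic invariants.

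The key computation is that $(g,g')$ is a real constant (in fact equal to $\langle g,g'\rangle$) for arbitrary $g,g' \in \mathcal{F}$. Writing $g = \sum_k a_k f_k$ and $g' = \sum_l b_l f_l$ in the canonical basis and using the canonical relations $(f_k,f_l) = \delta_{kl}$ as polynomial identities in $S$, we obtain $(g,g') = \sum_k a_k b_k = \langle g,g'\rangle$. Applied to our $g_i$: if $\deg g_i \neq \deg g_j$, then $(g_i,g_j) = \langle g_i,g_j\rangle = 0$ automatically, since $\langle\cdot,\cdot\rangle$ vanishes between homogeneous elements of different degrees. If $\deg g_i = \deg g_j = m$, the identity $\varepsilon(dh) = (\deg h)h$ together with injectivity of $\varepsilon$ on $\Omega_W$ yields $dg_i = m\omega_i$; substituting into the formula $(dg_i,dg_j) = m(g_i,g_j)$ established just before the corollary gives $m(g_i,g_j) = m^2(\omega_i,\omega_j) = 0$ by orthogonality of the $\omega_i$, whence $(g_i,g_j) = 0$.

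Since $(g_i,g_i) = \langle g_i,g_i\rangle > 0$ by positive-definiteness of $\langle\cdot,\cdot\rangle$, the rescaling $g_i \mapsto g_i/\sqrt{(g_i,g_i)}$ produces the required canonical system. The chief subtlety in the argument is reducing the a priori polynomial-valued bilinear form $(g,g')$ to a real constant on $\mathcal{F}$; this relies crucially on the canonical relations $(f_k,f_l) = \delta_{kl}$ holding as identities in $S$, and not merely as scalar equalities at $x = 0$, so that different-degree basic invariants are automatically orthogonal with respect to $(\cdot,\cdot)$.
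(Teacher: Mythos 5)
Your proof is correct and follows essentially the same route as the paper, which derives the corollary directly from the identity $(df,dg)=(\deg f)\langle f,g\rangle$ and the fact that $d|_{\mathcal{F}}$ and $\varepsilon|_{\Omega_W}$ are mutually inverse (up to degree scaling) isomorphisms preserving orthogonality of homogeneous elements. Your additional observation that $(\cdot,\cdot)$ is scalar-valued on $\mathcal{F}\times\mathcal{F}$ and agrees with $\langle\cdot,\cdot\rangle$ there is exactly the detail needed to upgrade orthogonality at $x=0$ to the polynomial identities required by Definition \ref{cano-def}, a point the paper leaves implicit.
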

\section{Construction}\label{ex-form}

The following map is a key to our construction of the canonical system. 
\begin{Defi}
Define a map 
\begin{align}
\phi\,:\, S\longrightarrow\RB[\partial]\Delta,
\ \phi(f):=((f,\Delta),\Delta)\qquad
{\rm for\ }f\in S
\end{align}
where $(\cdot,\cdot)$ is the bilinear map \eqref{bil}.
\end{Defi}
\begin{Prop}\label{Prop:property of phi}
The map $ \phi $ satisfies the following:
\begin{enumerate}
\item[$(1)$] $ \phi $ is a $ W $-homomorphism, 
\item[$(2)$] $ \phi $ preserves a homogeneous component of $ S $, i.e., $ \phi(S_{k}) \subseteq S_{k} $ for any nonnegative integer $ k $, 
\item[$(3)$] $ \ker \phi = I $, 
\item[$(4)$] $ \phi $ is symmetric with respect to the inner product 
$\langle\cdot,\cdot\rangle$ \eqref{inner}, i.e.,
\begin{align*}
\langle \phi(f), g \rangle = \langle f, \phi(g) \rangle
\end{align*}  
for $ f, g \in S $. 
Therefore, $ \phi : S \rightarrow S $ is diagonalizable 
by homogeneous polynomials. 
\end{enumerate}
\end{Prop}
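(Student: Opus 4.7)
The strategy is to factor $\phi$ as $\phi = \psi \circ \psi$, where $\psi : S \to \RB[\partial]\Delta$ is given by $\psi(f) := (f, \Delta) = f(\partial)\Delta$, and to derive the four properties from simpler analogues for $\psi$. This pays off because $\psi$ is exactly the map whose kernel and image are controlled by Lemma \ref{Stein1}: $\ker \psi = I$ and $\Ima \psi = \RB[\partial]\Delta$.

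For (1), since $W \subseteq O(V)$ acts on $\partial_{1},\dots,\partial_{n}$ by the same matrices as on $x_{1},\dots,x_{n}$, one has the operator identity $(wf)(\partial) = w\,f(\partial)\,w^{-1}$, equivalently $w(f(\partial)g) = (wf)(\partial)(wg)$ for all $w \in W$ and $f,g \in S$. Specializing $g = \Delta$ and using the antiinvariance $w\Delta = (\det w)\Delta$ yields $\psi(wf) = (\det w)\, w\,\psi(f)$, so $\psi$ is $W$-equivariant only up to the sign character; iterating gives $\phi(wf) = (\det w)^{2}\,w\,\phi(f) = w\,\phi(f)$. Assertion (2) is immediate: with $N := |\Phi^{+}| = \deg \Delta$, we have $\psi(S_{k}) \subseteq S_{N-k}$, hence $\phi(S_{k}) \subseteq S_{k}$. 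For (3), $\ker \psi = I \subseteq \ker \phi$ is clear; conversely, if $\phi(f) = 0$ then $\psi(f) \in \ker \psi \cap \Ima \psi = I \cap \RB[\partial]\Delta = 0$ by Lemma \ref{Stein1}(3), so $f \in \ker \psi = I$.

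For (4), the main ingredient is the adjunction $\langle fg, h \rangle = \langle g, f(\partial)h \rangle$, a direct consequence of $(fg)(\partial) = f(\partial)g(\partial)$ and the commutativity of partial derivatives. Applied with $h = \Delta$ this yields
\[
\langle \psi(f), g \rangle = \langle f(\partial)\Delta, g \rangle = \langle fg, \Delta \rangle = \langle f, g(\partial)\Delta \rangle = \langle f, \psi(g) \rangle,
\]
so $\psi$ is self-adjoint, and hence so is $\phi = \psi \circ \psi$. Combined with (2) and the fact that $\langle \cdot, \cdot \rangle$ is positive definite on each finite-dimensional $S_{k}$ (since $\langle x^{\alpha}, x^{\beta}\rangle = \alpha!\,\delta_{\alpha,\beta}$), the spectral theorem on each $S_{k}$ produces an orthogonal basis of homogeneous eigenvectors of $\phi$, proving the diagonalizability claim. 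The only subtle point in the whole proof is the equivariance in (1): $\psi$ itself is not a $W$-homomorphism because the antiinvariance of $\Delta$ introduces a sign twist, and one must notice that squaring cancels the twist precisely. Everything else reduces to Lemma \ref{Stein1} and the adjunction identity for $\langle \cdot, \cdot \rangle$.
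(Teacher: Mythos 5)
Your proof is correct and follows essentially the same route as the paper's: the sign cancellation $(\det w)^2=1$ for (1), the degree count $\deg\Delta-(\deg\Delta-k)=k$ for (2), the identification $\ker\phi$ via $I\cap\RB[\partial]\Delta=0$ from Lemma \ref{Stein1} for (3), and the adjunction $\langle fg,h\rangle=\langle g,f(\partial)h\rangle$ for (4). Your only departure is the tidy factorization $\phi=\psi\circ\psi$ with $\psi(f)=(f,\Delta)$, which packages the same computations (and your explicit appeal to the spectral theorem on each $S_k$ fills in the diagonalizability claim the paper leaves implicit), but it is not a different argument.
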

\begin{proof}
(1) For any $ w \in W$, we have
\begin{align*}
w \cdot ((f,\Delta),\Delta) = ((w \cdot f, \det w \Delta), \det w \Delta) = ((w\cdot f, \Delta)\Delta). 
\end{align*}
Hence $ w\cdot \phi(f) = \phi(w \cdot f) $. 

(2) For any $ f \in S_{k} $, we have
\begin{align*}
\deg \phi(f) = \deg \Delta - \deg (f,\Delta) = 
\deg \Delta - (\deg \Delta - \deg f) = \deg f. 
\end{align*}

(3) By Lemma \ref{Stein1}, 
\begin{align*}
f \in \ker\phi \Leftrightarrow ((f,\Delta),\Delta) = 0
\Leftrightarrow (f, \Delta) \in I \cap \mathbb{R}[\partial]\Delta = 0
\Leftrightarrow f \in I. 
\end{align*}

(4) We may assume that $ f $ and $ g $ are homogeneous.
If $ \deg f \neq \deg g $ then $ \langle \phi(f),g \rangle = 0 = \langle f, \phi(g) \rangle $. 
Then we only need to verify the assertion when $ \deg f = \deg g $. 
Put $ F:=(f,\Delta) $ and $ G:=(g,\Delta) $.
We have
\begin{align*}
\langle \phi(f), g \rangle
= \langle g, \phi(f) \rangle
= \langle g, (F,\Delta) \rangle
=\langle gF, \Delta \rangle
=\langle F, (g,\Delta) \rangle
=\langle F,G \rangle.  
\end{align*}
Hence $ \langle f,\phi(g) \rangle = \langle G, F \rangle = \langle F, G \rangle = \langle \phi(f), g \rangle$. 
\end{proof}
\begin{flushleft}
{\it Remark.} 
The map $\phi_{h}(f):=((f,h),h)$ satisfies the properties $(2)$ and $(4)$, 
where $ h $ is a homogeneous polynomial. 
The proofs go similarly to Proposition \ref{Prop:property of phi}.
\end{flushleft}
The map $\phi$ induce a linear map 
$\tilde{\phi}\,:\,\Omega^1(V)^W\longrightarrow\Omega_{W}$ 
defined by $\tilde{\phi}(\sum f\otimes x):=\sum\phi(f)\otimes x$. 
By Proposition \ref{Prop:property of phi} (3),
\begin{align*}
\ker \tilde{\phi} = \left(I\otimes_{\RB} V^{\ast}\right)^W, 
\end{align*}
and then the restriction $ \tilde{\phi}|_{\Omega_{W}}:\Omega_{W}\rightarrow\Omega_{W} $ is an isomorphism by Lemma \ref{Stein1}. 
For any homogeneous element 
$ \omega_{1}=\sum_{j=1}^{n}g_{j}dx_{j},
\omega_{2}=\sum_{j=1}^{n}h_{j}dx_{j} \in \Omega_{W}$ 
with $\deg\omega_{1}=\deg\omega_{2}$, 
by Proposition \ref{Prop:property of phi} (4),
\begin{align*}
\left(\tilde{\phi}(\omega_{1}), \omega_{2}\right)
&=\left(\sum_{j=1}^{n}\phi(g_{j})dx_{j}, \sum_{j=1}^{n}h_{j}dx_{j}\right)
=\sum_{j=1}^{n}\langle\phi(g_{j}), h_{j}\rangle \\
&=\sum_{j=1}^{n}\langle g_{j},\phi(h_{j})\rangle
=\left(\sum_{j=1}^{n}g_{j}dx_{j}, \sum_{j=1}^{n}\phi(h_{j})dx_{j}\right)\\
&=\left(\omega_{1},\tilde{\phi}(\omega_{2})\right). 
\end{align*}
If $\deg\omega_{1}\neq\deg\omega_{2}$, then 
$\left(\tilde{\phi}(\omega_{1}), \omega_{2}\right)=0=
\left(\omega_{1},\tilde{\phi}(\omega_{2})\right).$ 
Hence the map $ \tilde{\phi}|_{\Omega_{W}} $ is symmetric 
with respect to the inner product \eqref{Inner product on Omega_{W}} 
and $ \tilde{\phi}|_{\Omega_{W}} $ is diagonalizable 
with homogeneous eigenvectors. 
\begin{Prop}\label{Proposition:eigenvectors}
There exists a canonical system $\{f_{1},\dots,f_{n}\}$ 
such that $ df_{1},\dots,df_{n} $ are eigenvectors of $ \tilde{\phi} $. 
\end{Prop}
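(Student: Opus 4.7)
The plan is to combine the two structural facts already established about $\tilde{\phi}|_{\Omega_{W}}$ — that it is symmetric and that it is diagonalizable by homogeneous vectors — with the Corollary stating that an orthogonal basis of homogeneous elements of $\Omega_{W}$ descends, via $\varepsilon$ and normalization, to a canonical system.

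First, I would produce an orthogonal basis of $\Omega_{W}$ consisting of homogeneous eigenvectors of $\tilde{\phi}$. The inner product $\langle\cdot,\cdot\rangle$ on $S$ is positive definite (monomials form an orthogonal basis with $\langle x^{\alpha},x^{\alpha}\rangle=\alpha!>0$), so the induced inner product on $\Omega_{W}$ is positive definite as well. Because $\phi$ preserves each homogeneous component $S_{k}$ (Proposition \ref{Prop:property of phi} (2)), $\tilde{\phi}$ preserves the graded decomposition $\Omega_{W}=\bigoplus_{k}\Omega_{W}^{(k)}$. On each finite-dimensional graded piece $\Omega_{W}^{(k)}$, $\tilde{\phi}$ is a symmetric operator with respect to a positive-definite inner product, so by the real spectral theorem it admits an orthogonal basis of eigenvectors. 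Concatenating these bases over $k$ yields an orthogonal basis $\omega_{1},\dots,\omega_{n}$ of $\Omega_{W}$ consisting of homogeneous eigenvectors of $\tilde{\phi}$.

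Second, I would invoke the Corollary: normalizing $\varepsilon(\omega_{1}),\dots,\varepsilon(\omega_{n})$ produces a canonical system $f_{1},\dots,f_{n}$, i.e., $f_{i}=c_{i}\varepsilon(\omega_{i})$ for suitable $c_{i}\in\mathbb{R}$.

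Third, I would verify that each $df_{i}$ is an eigenvector of $\tilde{\phi}$. Since $d|_{\mathcal{F}}:\mathcal{F}\to\Omega_{W}$ is an isomorphism, each homogeneous $\omega_{i}$ equals $dg_{i}$ for a unique homogeneous $g_{i}\in\mathcal{F}$. Then $\varepsilon(\omega_{i})=\varepsilon(dg_{i})=(\deg g_{i})\,g_{i}$, so $f_{i}=c_{i}(\deg g_{i})\,g_{i}$ and consequently $df_{i}=c_{i}(\deg g_{i})\,\omega_{i}$ is a scalar multiple of $\omega_{i}$. Hence $df_{i}$ is an eigenvector of $\tilde{\phi}$ with the same eigenvalue as $\omega_{i}$. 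There is no real obstacle: everything needed is packaged into the already-proved statements that $\tilde{\phi}|_{\Omega_{W}}$ is a symmetric operator on a graded positive-definite inner product space and that $d|_{\mathcal{F}}$ and $\varepsilon|_{\Omega_{W}}$ are inverse homogeneity-preserving isomorphisms up to degree scaling.
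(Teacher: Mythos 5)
Your proof is correct and follows essentially the same route as the paper: both exploit the symmetry of $\tilde{\phi}|_{\Omega_{W}}$ with respect to the inner product \eqref{Inner product on Omega_{W}} to produce an orthogonal basis of homogeneous eigenvectors (the paper via the mutual orthogonality of the eigenspaces $U_{d,\lambda}$, you via the spectral theorem on each graded piece), and then apply the Corollary. Your third step, checking explicitly that normalizing $\varepsilon(\omega_{i})$ and differentiating returns a scalar multiple of $\omega_{i}$, is a detail the paper leaves implicit, and is a welcome addition.
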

\begin{proof}
Let $U_{d,\lambda}$ be the subspace of $\Omega_{W}$ consisting 
of homogeneous eigenvectors with degree $d$ and eigenvalue $\lambda$. 
We assume that $U_{d_{1},\lambda_{1}}\neq U_{d_{2},\lambda_{2}}$. 
If $d_{1}\neq d_{2}$, then $U_{d_{1},\lambda_{1}}$ is orthogonal to 
$U_{d_{2},\lambda_{2}}$. When $\lambda_{1}\neq \lambda_{2}$, let 
$\omega_{1}\in U_{d_{1},\lambda_{1}}$ and 
$\omega_{2}\in U_{d_{2},\lambda_{2}}$. 
By Proposition \ref{Prop:property of phi} (4), we have 
\begin{align*}
\lambda_{i}\left(\omega_{i},\omega_{j}\right)
= \left(\tilde{\phi}(\omega_{i}),\omega_{j}\right)
= \left(\omega_{i}, \tilde{\phi}(\omega_{j})\right)
= \lambda_{j}\left(\omega_{i},\omega_{j}\right). 
\end{align*}
Then $ (\omega_{i},\omega_{j}) = 0 $. 
This means that $U_{d_{1},\lambda_{1}}$ is orthogonal to
$U_{d_{2},\lambda_{2}}$. 
Hence $\Omega_{W}$ is decomposed into the direct sum of 
orthogonal components $U_{d,\lambda}$. 

Let $\{\omega_{1}\dots,\omega_{n}\}$ be a orthogonal basis for 
$\Omega_{W}=\bigoplus_{d,\lambda}U_{d,\lambda}$. 
The normalization of $ \varepsilon(\omega_{1})\dots,\varepsilon(\omega_{n}) $ 
is the required canonical system.
\end{proof}
\begin{Them}\label{const-can-sys}
For an arbitrary system of basic invariants $\{h_{1},\dots,h_{n}\}$, 
the system 
\begin{align*}
\{\tilde{\phi}(dh_{1}), \dots, \tilde{\phi}(dh_{n})\}
\end{align*}
is a basis for $ \Omega_{W} $. Therefore, 
the Gram-Schmidt orthogonalization of 
\begin{align}
\left\{\varepsilon\circ\tilde{\phi} (dh_{i})
=\sum_{j=1}^n x_{j}\phi(\partial_{j}h_{i})\,\middle|\,
i=1,\dots,n\right\}\label{can-sys-thm}
\end{align}
with respect to the inner product (\ref{inner}) takes 
a canonical system of basic invariants.
\end{Them}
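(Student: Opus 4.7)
The plan is to split the argument into two stages: first, show that $\{\tilde{\phi}(dh_{1}),\dots,\tilde{\phi}(dh_{n})\}$ is a basis for $\Omega_{W}$; then deduce from the preceding Corollary that Gram-Schmidt orthogonalization of its $\varepsilon$-image produces a canonical system. Throughout, I would fix a homogeneous canonical system $\{f_{1},\dots,f_{n}\}$ (which exists by Flatto and Wiener) with degrees $d_{1}<\cdots<d_{n}$; these are distinct because $W$ is irreducible.

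For the basis claim, the key point is that each $\tilde{\phi}(dh_{i})$ is nonzero. Since $\{f_{j}\}$ generates $R$, I can write $h_{i}=P_{i}(f_{1},\dots,f_{n})$, and extracting the homogeneous weighted-degree-$d_{i}$ piece (using distinctness of the $d_{i}$) yields a decomposition $h_{i}=\alpha_{i} f_{i}+q_{i}$, where $q_{i}\in R_{+}^{2}$ is a sum of products of at least two of the $f_{j}$. The scalar $\alpha_{i}\neq 0$, since $\{h_{i}\}$ is a minimal homogeneous generating set of $R$, so its image spans $R_{+}/R_{+}^{2}$ (Nakayama). Applying $d$ gives $dh_{i}=\alpha_{i}\,df_{i}+dq_{i}$, and because $q_{i}\in R_{+}^{2}$, the product rule shows $dq_{i}\in R_{+}\cdot\Omega^{1}(V)^{W}\subseteq (I\otimes_{\RB}V^{\ast})^{W}=\ker\tilde{\phi}$, using Proposition \ref{Prop:property of phi}(3). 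Hence $\tilde{\phi}(dh_{i})=\alpha_{i}\tilde{\phi}(df_{i})$, which is nonzero because $df_{i}\in\Omega_{W}$ is nonzero (as $d|_{R_{+}}$ is injective) and $\tilde{\phi}|_{\Omega_{W}}$ is an isomorphism. Each $\tilde{\phi}(dh_{i})$ is homogeneous of degree $d_{i}-1$, so these $n$ nonzero elements occupy distinct graded pieces of $\Omega_{W}$ and are $\RB$-linearly independent; since $\dim_{\RB}\Omega_{W}=n$, they form a basis.

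For the second stage, the basis $\{\tilde{\phi}(dh_{i})\}$ is automatically orthogonal with respect to \eqref{Inner product on Omega_{W}} because its elements have distinct degrees and homogeneous elements of distinct degrees pair to zero under $\langle\cdot,\cdot\rangle$. Equivalently, the invariants $g_{i}:=\varepsilon\circ\tilde{\phi}(dh_{i})$ have pairwise distinct degrees $d_{1},\dots,d_{n}$, so the Gram-Schmidt process with respect to \eqref{inner} collapses to mere normalization $g_{i}\mapsto g_{i}/\sqrt{\langle g_{i},g_{i}\rangle}$. Applying the preceding Corollary to the orthogonal homogeneous basis $\{\tilde{\phi}(dh_{i})\}$ of $\Omega_{W}$ identifies this normalization as a canonical system of basic invariants. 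The only substantive obstacle in the whole argument is the nonvanishing $\tilde{\phi}(dh_{i})\neq 0$, which the decomposition $h_{i}=\alpha_{i} f_{i}+q_{i}$ handles by reducing it to the already-established behavior of $\tilde{\phi}$ on the $df_{i}$.
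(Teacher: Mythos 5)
There is a genuine gap, and it sits exactly where the theorem is hardest. You assert that the degrees $d_{1}<\cdots<d_{n}$ of a homogeneous system of basic invariants are distinct ``because $W$ is irreducible.'' This is false: for $W$ of type $D_{n}$ with $n=2\ell$ even ($n\geq 4$), the degrees are $2,4,\dots,n,n,\dots,2n-2$, so two basic invariants share degree $n$ (the paper recalls this at the start of Section 4, citing \cite[Sect.\ 3.7, Table 1]{Hum}). Your entire argument leans on this distinctness twice: first to conclude that the $n$ nonzero elements $\tilde{\phi}(dh_{i})$ lie in distinct graded pieces and are therefore independent, and second to claim the basis is automatically orthogonal so that Gram--Schmidt degenerates to normalization. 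Both steps break for $D_{2\ell}$: you get two elements $\tilde{\phi}(dh_{\ell})$, $\tilde{\phi}(dh_{\ell+1})$ of the same degree with no argument that they are independent, and the one-term decomposition $h_{i}=\alpha_{i}f_{i}+q_{i}$ with $q_{i}\in R_{+}^{2}$ is not even available, since the degree-$n$ graded piece of $R_{+}/R_{+}^{2}$ is two-dimensional.

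The paper's proof is built precisely to absorb this case. It chooses the canonical system $\{f_{i}\}$ so that the $df_{i}$ are eigenvectors of $\tilde{\phi}$ (Proposition \ref{Proposition:eigenvectors}), writes $h_{i+j}=\sum_{k=0}^{r-1}a_{jk}f_{i+k}+P_{j}(f_{1},\dots,f_{i-1})$ for the whole block of $r$ invariants of a common degree, and obtains $\tilde{\phi}(dh_{i+j})=\sum_{k}a_{jk}\lambda_{i+k}\,df_{i+k}$ with $\det[a_{jk}]\neq 0$ (from algebraic independence of the $h_{i}$) and all eigenvalues $\lambda_{i+k}\neq 0$ (since $\tilde{\phi}|_{\Omega_{W}}$ is an isomorphism). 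That yields linear independence within each degree block, and Gram--Schmidt is then genuinely needed inside a block. Your computation outside the repeated-degree block is fine and is essentially the $r=1$ instance of the paper's argument (it is in effect Theorem \ref{can-sys1}), but as a proof of Theorem \ref{const-can-sys} it is incomplete: you must either run the block argument with eigenvectors as the paper does, or handle $D_{2\ell}$ by some other device such as the monomial $x_{1}\cdots x_{n}$ used in Theorem \ref{can-sys2}.
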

\begin{proof}
We can take a canonical system $ f_{1} \dots,f_{n}$ 
satisfying that $ df_{1}, \dots, df_{n} $ are eigenvectors of 
$ \tilde{\phi} $ by Proposition \ref{Proposition:eigenvectors}. 
Let $ \lambda_{1},\dots,\lambda_{n} $ be the eigenvalues of 
$ df_{1}, \dots, df_{n} $, respectively. 
All of these are nonzero since $ \tilde{\phi}|_{\Omega_{W}} $ 
is an isomorphism. 
We may assume that $ m_{i}:=\deg f_{i} = \deg h_{i} $ with 
$ m_{1} \leq \dots \leq m_{n} $. 
Let $i=1,\dots,n$, and let $r$ be the number of invariants of 
degree $m_{i}$ in $\{h_{1},\dots,h_{n}\}$. 
We may assume $m_{i}=\cdots=m_{i+r-1}$. 
We only need to verify that the linear independence of 
$\{\tilde{\phi}(dh_{i}), \dots, \tilde{\phi}(dh_{i+r-1})\}$. 

For $ j=0,\dots,r-1$, we may write 
\begin{align}
h_{i+j} = \sum_{k=0}^{r-1}a_{j \, k}f_{i+k} + 
P_{j}(f_{1},\dots,f_{i-1}),\label{const-can-sys1}
\end{align}
where $P_{j}=P_{j}(f_{1},\dots,f_{i-1})$ is a polynomial in 
$f_{1}, \dots ,f_{i-1}$ and $a_{j\,k}\in\RB$. 
Since $\{h_{1},\dots,h_{n}\}$ is algebraically independent, 
we have $ \det [a_{j \, k}] \neq 0$. 
Then the $1$-form $dP_{j}$ is in 
$\ker \tilde{\phi}=\left(I\otimes_{\RB} V^{\ast}\right)^W$. 
Applying $\tilde{\phi}\circ d$ 
on the equality \eqref{const-can-sys1}, we have
\begin{align}
\tilde{\phi}(dh_{i+j}) = \sum_{k=0}^{r-1}a_{j \, k}\lambda_{i+k}df_{i+k}. 
\end{align}
Therefore, $ \tilde{\phi}(dh_{i}), \dots, \tilde{\phi}(dh_{i+r-1}) $ 
are linearly independent since $ \det [a_{j \, k}] \neq 0 $. 
\end{proof}
\section{Case observations}

By focusing our mind on the classification of 
finite irreducible reflection groups, 
we notice that the construction (Theorem \ref{const-can-sys}) is more refined. 
For this purpose, we take an orthogonal basis consisting of eigenvectors 
of the linear map $\tilde{\phi}$ for each type of the classification. 
We need to make a consideration of two cases; one of them is the case of 
the types except $D_{n}$ with even $n$ ($n\geq 4$) and 
the other one is the case of type $D_{n}$ with even $n$ ($n\geq 4$). 

If $W$ is not of the type $D_{n}$ with even $n$ ($n\geq 4$), then 
the degrees of basic invariants are distinct. 
If $W$ is of the type $D_{n}$ with even $n$ ($n\geq 4$), then 
the degrees are the numbers $2,4,\dots,n,n,\dots,2n-2$ 
(see \cite[sect. 3.7 Table 1]{Hum}). 
\subsection{Types except $D_{n}$ with even $n$}\label{neqD_2l}

We assume that 
$W$ is not of the type $D_{n}$ with even $n$ ($n\geq 4$). 
We recall the degrees of basic invariants are distinct. 

Therefore, for any basic invariants $ h_{1},\dots,h_{n} $, it follows that 
$\{\tilde{\phi}(dh_{1}), \dots, \tilde{\phi}(dh_{n})\}$ 
is an orthogonal basis for $ \Omega_{W} $. 
Hence the system (\ref{can-sys-thm}) is an orthogonal basis for $ \mathcal{F}$.
\begin{Them}\label{can-sys1}
Let $h_{1},\dots,h_{n}$ be a system of basic invariants 
with $\deg h_{i}=m_{i}$. Then the normalization of a system 
$\left\{\sum_{j=1}^n x_{j}\phi(\partial_{j}h_{i})\mid
i=1,\dots,n\right\}$ 
takes a canonical system of basic invariants. 
\end{Them}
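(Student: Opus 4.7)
The plan is to show that in this setting the system $\{\tilde{\phi}(dh_{1}),\dots,\tilde{\phi}(dh_{n})\}$ produced by Theorem \ref{const-can-sys} is already orthogonal in $\Omega_{W}$, so that no Gram-Schmidt step is required. First I would invoke Theorem \ref{const-can-sys} to conclude that this collection is a basis for $\Omega_{W}$, and observe that since $\tilde{\phi}$ preserves degree (Proposition \ref{Prop:property of phi} (2)) the basis element $\tilde{\phi}(dh_{i})$ is homogeneous of degree $m_{i}-1$ as an element of $\Omega_{W}$.

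The crucial input is the hypothesis on $W$: excluding type $D_{n}$ with even $n$, the degrees $m_{1},\dots,m_{n}$ of a system of basic invariants are pairwise distinct. Hence the $\tilde{\phi}(dh_{i})$ lie in pairwise distinct homogeneous components of $\Omega_{W}$. Next I would use the inner product \eqref{Inner product on Omega_{W}}: if $\omega_{1},\omega_{2}\in\Omega_{W}$ are homogeneous of different degrees, write $\omega_{1}=\sum_{j}g_{j}dx_{j}$ and $\omega_{2}=\sum_{j}h_{j}dx_{j}$; then for every $j$ the polynomials $g_{j}$ and $h_{j}$ are homogeneous of distinct degrees, so $\langle g_{j},h_{j}\rangle=f_{j}(\partial)h_{j}(x)|_{x=0}=0$ by \eqref{inner}, and therefore $(\omega_{1},\omega_{2})=0$. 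This shows that distinct homogeneous components of $\Omega_{W}$ are orthogonal, hence $\{\tilde{\phi}(dh_{i})\}$ is automatically an orthogonal basis.

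Applying the $W$-isomorphism $\varepsilon|_{\Omega_{W}}$ then produces the orthogonal system $\{\varepsilon\circ\tilde{\phi}(dh_{i})\}=\{\sum_{j}x_{j}\phi(\partial_{j}h_{i})\}$ in $\mathcal{F}$, because $\varepsilon|_{\Omega_{W}}$ preserves orthogonality of homogeneous elements (as established in Section 2 via the identity $(df,dg)=(\deg f)(f,g)$ for homogeneous $f,g\in\mathcal{F}$ of equal degree). In particular these $n$ elements of $\mathcal{F}$ are algebraically independent homogeneous invariants (being a basis of $\mathcal{F}$ under $\varepsilon^{-1}\circ d$), hence form a system of basic invariants. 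Finally, rescaling each generator to have unit norm with respect to $\langle\cdot,\cdot\rangle$ converts the orthogonal basis into an orthonormal one, which by Definition \ref{cano-def} is exactly a canonical system.

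The only genuinely new point beyond what is already assembled in the excerpt is the orthogonality-by-degree observation; the main potential pitfall is making sure that one really can skip Gram-Schmidt, i.e.\ that no two basic invariants share a degree. This is precisely where the hypothesis excluding even-$n$ type $D_{n}$ is used, and it is read off from the degree tables in \cite[Sect.\ 3.7 Table 1]{Hum}; everything else is a direct packaging of the tools built in Sections 2 and 3.
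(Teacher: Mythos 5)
Your proposal is correct and follows essentially the same route as the paper: the paper likewise deduces the theorem from Theorem \ref{const-can-sys} by observing that, since the degrees $m_{1},\dots,m_{n}$ are pairwise distinct outside type $D_{n}$ with even $n$, the homogeneous basis $\{\tilde{\phi}(dh_{i})\}$ is automatically orthogonal by degree, so Gram--Schmidt reduces to normalization after applying $\varepsilon$. The only difference is that you spell out the orthogonality of distinct homogeneous components of $\Omega_{W}$ directly from \eqref{inner} and \eqref{Inner product on Omega_{W}}, which the paper leaves implicit.
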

\subsection{Type $D_{n}$ with even $n$ ($n\geq 4$)}
Let $n=2\ell\ (\ell\geq 2)$. In this subsection, we assume $W$ is the 
irreducible finite reflection group of type $D_{n}$. 
In this case, two basic invariants have degree $m_{\ell}=m_{\ell+1}=n$. 
We find a homogeneous polynomial $ f \in \mathcal{F} $ of degree 
$ n $ satisfying that $ df $ is an eigenvector of $ \tilde{\phi} $. 
It is well-known that $ \sum_{j=1}^{n}x_{j}^{2i} \, (i=1,\dots, n-1) $ and 
$\prod_{i=1}^n x_{i}$ form a system of basic invariants. 
Hence $\prod_{i=1}^n x_{i}\in \mathcal{F}$ 
(Iwasaki \cite{Iwa} also constructed a canonical system of basic invariants 
which contains $\prod_{i=1}^n x_{i}$). 
We will prove that $d(\prod_{i=1}^n x_{i})$ is 
an eigenvector of $ \tilde{\phi} $. 

The antiinvariant $\Delta$ is given by 
\begin{align}
\Delta = \prod_{1 \leq i < j \leq n}(x_{i}^{2} - x_{j}^{2}) = 
\sum_{\boldsymbol{a}\in2\mathbb{N}^{n}}c_{\boldsymbol{a}}x^{\boldsymbol{a}}
\end{align}
for some coefficient $ c_{\boldsymbol{a}} \in \mathbb{R}$. 
We denote $|\bm{a}|:=a_{1}+\cdots+a_{n}$ for a multi-index 
$\bm{a}=(a_{1},\dots,a_{n})\in\NB^{n}$. 
We write $\bm{a}\geq \bm{b}$ if $a_{k}\geq b_{k}$ for all $k=1,\dots,n$. 
Put $\bm{1}:=(1,\dots,1)\in\NB^{n}$. 
Let $\bm{e}_{j}\in\NB^{n}$ be the $j$-th unit vector of $\NB^{n}$. 
\begin{Prop}
Let $ f = x_{1}\cdots x_{n} $. 
Then $ df $ is an eigenvector of $ \tilde{\phi} $. 
\end{Prop}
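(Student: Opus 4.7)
The plan is to exploit an outer symmetry of $W = D_n$ coming from the ambient hyperoctahedral group $B_n \supset D_n$. Define $\sigma \in O(V)$ by $\sigma(x_1) = -x_1$ and $\sigma(x_j) = x_j$ for $j \ge 2$; this reflection lies in $B_n \setminus D_n$ and normalizes $W$. The crucial observation is that each factor $x_i^2 - x_j^2$ of $\Delta = \prod_{i<j}(x_i^2 - x_j^2)$ is unchanged under $\sigma$, so $\sigma\Delta = \Delta$.

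Using the $O(V)$-equivariance of the bilinear map $(\cdot,\cdot)$ together with $\sigma\Delta = \Delta$, I would first check that $\sigma\phi(g) = ((\sigma g,\sigma\Delta),\sigma\Delta) = \phi(\sigma g)$, so $\phi$ and hence $\tilde\phi$ commute with $\sigma$. On the other hand, $\sigma f = -f$ implies $\sigma(df) = d(\sigma f) = -df$, so $df$ lies in the $(-1)$-eigenspace of $\sigma$ on $\Omega_W$. The argument will be complete once we show this $(-1)$-eigenspace is one-dimensional in the graded piece containing $df$.

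To that end, I would argue via $\varepsilon$: since $\varepsilon\colon\Omega_W \to \mathcal{F}$ is a $W$- and $\sigma$-equivariant isomorphism, it suffices to show the $\sigma$-antiinvariant part of $\mathcal{F}^{(n)}$ equals $\mathbb{R} f$. For any $h$ in the $\sigma$-antiinvariant part of $R^{(n)}$, the $D_n$-invariance of $h$ combined with $\sigma_i\sigma_1 \in D_n$ yields $\sigma_i h = -h$ for every $i$. Setting $x_i = 0$ then forces $h|_{x_i=0} = 0$, so $x_i \mid h$ for each $i$, hence $f \mid h$; comparing degrees gives $h \in \mathbb{R} f$. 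Therefore the $\sigma$-antiinvariant part of $R^{(n)}$ (and a fortiori of $\mathcal{F}^{(n)} \subseteq R^{(n)}$) is at most one-dimensional, and it is exactly $\mathbb{R} f$ because $f \in \mathcal{F}$ by the discussion preceding the proposition.

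With the $(-1)$-eigenspace of $\sigma$ on $\Omega_W$ (in the relevant degree) pinned down as $\mathbb{R}\, df$, and $\tilde\phi$ commuting with $\sigma$, $\tilde\phi$ must map $\mathbb{R}\, df$ to itself; hence $\tilde\phi(df) = \lambda\, df$ for some $\lambda \in \mathbb{R}$, which is exactly the claim. The main obstacle is the divisibility step in the third paragraph: one needs to promote $\sigma_1$-antiinvariance to $\sigma_i$-antiinvariance for every $i$ via elements of $D_n$, and then extract divisibility by each coordinate $x_i$. Once that is in hand, the remaining steps are formal consequences of the $O(V)$-equivariance of $\phi$ and $\varepsilon$ combined with the fact that $\sigma\Delta = \Delta$.
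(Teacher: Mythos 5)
Your argument is correct, and it is genuinely different from the one in the paper. The paper proves the proposition by brute force on monomials: writing $\Delta=\sum_{\bm a\in 2\NB^{n}}c_{\bm a}x^{\bm a}$, it shows via a parity count on multi-indices that $((x^{\bm 1-\bm e_{1}},x^{\bm a}),x^{\bm b})$ vanishes unless $\bm a=\bm b$, so that $\phi(\partial_{1}f)\in\RB\,\partial_{1}f$, and then transports this to the other coordinates by the transpositions $(1\;j)\in W$. You instead exploit the diagram automorphism: the sign change $\sigma\colon x_{1}\mapsto -x_{1}$ lies in $B_{n}\setminus D_{n}$, normalizes $W$, fixes $\Delta=\prod_{i<j}(x_{i}^{2}-x_{j}^{2})$, hence commutes with $\phi$ and $\tilde\phi$, while $\sigma f=-f$; your divisibility argument (promoting $\sigma$-antiinvariance to antiinvariance under every $\sigma_{i}$ via $\sigma_{i}\sigma_{1}\in D_{n}$, then extracting $x_{i}\mid h$ for all $i$) pins down the $(-1)$-eigenspace of $\sigma$ in degree $n$ as exactly $\RB f$, so $\tilde\phi$ must preserve the line $\RB\,df$. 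All the supporting steps check out: the bilinear map \eqref{bil} is $O(V)$-equivariant because the bases are orthonormal, $d$ and $\varepsilon$ are $\sigma$-equivariant, and $\tilde\phi$ preserves degree by Proposition \ref{Prop:property of phi}~(2). What your route buys is a conceptual explanation of why $x_{1}\cdots x_{n}$ is distinguished --- it spans the unique $\sigma$-antiinvariant line among degree-$n$ invariants --- and it immediately yields the orthogonality of $df_{\ell}$ and $df_{\ell+1}$ used right after the proposition; what the paper's computation buys is self-containedness (no appeal to the ambient group $B_{n}$) and an argument that stays entirely inside the machinery already set up for $\phi$.
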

\begin{proof}
Let $ \boldsymbol{a}, \boldsymbol{b}\in 2\mathbb{N}^{n} $ 
be muti-indices with $ |\boldsymbol{a}| = |\boldsymbol{b}| = \deg \Delta $. 
Note that $ \partial_{1}f = x^{\boldsymbol{1}-\boldsymbol{e}_{1}} $. 
Assume that $ ((x^{\boldsymbol{1}-\boldsymbol{e}_{1}}, x^{\boldsymbol{a}}), 
x^{\boldsymbol{b}}) \neq 0 $.
Then $ \boldsymbol{a}-\boldsymbol{1}+\boldsymbol{e}_{1} \geq \boldsymbol{0} $ 
and $\boldsymbol{b}-\boldsymbol{a}+\boldsymbol{1}-
\boldsymbol{e}_{1}\geq \boldsymbol{0}$. 
If $ \boldsymbol{a} \neq \boldsymbol{b} $, the multi-index 
$ \boldsymbol{b} - \boldsymbol{a} $ has a component less than or equal to 
$ -2 $ since $ |\boldsymbol{a}| = |\boldsymbol{b}| $ and 
$ \boldsymbol{a}, \boldsymbol{b} \in 2\mathbb{N}^{n} $. 
Then $ \boldsymbol{b} - \boldsymbol{a} + \boldsymbol{1} - \boldsymbol{e}_{1} 
\not\geq \boldsymbol{0} $. 
This is a contradiction, and $ \boldsymbol{a} = \boldsymbol{b} $. 
Thus $((x^{\boldsymbol{1}-\boldsymbol{e}_{1}}, x^{\boldsymbol{a}}),
 x^{\boldsymbol{b}}) = cx^{\boldsymbol{1} - \boldsymbol{e}_{1}} $
 for some $ c \in \mathbb{R}^{\times} $. 
Therefore, we conclude that
\begin{align}\label{Proposition : eigenvector x1...xn : 1}
((\partial_{1} f, x^{\boldsymbol{a}}), x^{\boldsymbol{b}}) = c\partial_{1}f
\end{align}
for some $ c \in \mathbb{R}^{\times} $. 
By \eqref{Proposition : eigenvector x1...xn : 1}, we have 
\begin{align}\label{Proposition : eigenvector x1...xn : 2}
\phi(\partial_{1}f) = ((\partial_{j}f,\Delta),\Delta) = 
\left(\left(f, \sum_{\boldsymbol{a}\in 2\mathbb{N}^{n}} 
c_{\boldsymbol{a}}x^{\boldsymbol{a}}\right), 
\sum_{\boldsymbol{b}\in 2\mathbb{N}^{n}} c_{\boldsymbol{b}}
x^{\boldsymbol{b}}\right)= \lambda \partial_{1}f
\end{align}
for some $ \lambda \in \mathbb{R} $. 
The map $ \phi $ is a $ W $-homomorphism and $ f $ is invariant. 
Applying the permutation $(1\;j)\in W\ (j=1,\dots,n)$ on the equation 
\eqref{Proposition : eigenvector x1...xn : 2}, we have 
$\phi(\partial_{1}f)= \lambda \partial_{1}f,\dots,
\phi(\partial_{n}f)= \lambda \partial_{n}f$. 
Hence $ \tilde{\phi}(df) = \lambda df $. 
\end{proof}
We take a canonical system $ f_{1}, \dots, f_{n} $ for 
$\mathcal{F}$ which contains the monomial $f_{\ell+1}=\prod_{i=1}^n x_{i}$. 
Note that $\langle f_{\ell+1},f_{\ell+1}\rangle=1$.
Then
\begin{align}
(\tilde{\phi}(df_{\ell}), df_{\ell+1}) = (df_{\ell}, \tilde{\phi}(df_{\ell+1}))
= \lambda(df_{\ell}, df_{\ell+1}) = 0. 
\end{align}
Since $df_{\ell}$ and $ \tilde{\phi}(df_{\ell}) $ are orthogonal to 
$ df_{\ell+1} $, the $ 1 $-form $ df_{\ell} $ is 
an eigenvector of $ \tilde{\phi} $. 
Let $h_{1},\dots,h_{\ell},h_{\ell+1},\dots,h_{n}$ be a 
system of basic invariants. 
We may assume that $\deg h_{i}=m_{i}$ for $i=1,\dots,n$. 
There exist polynomials $P_{\ell},P_{\ell+1}$ 
in $f_{1},\dots,f_{\ell-1}$ such that 
\begin{align*}
h_{\ell}&=a_{1}f_{\ell}+a_{2}f_{\ell+1}+P_{\ell}(f_{1},\dots,f_{\ell-1}),\\
h_{\ell+1}&=a_{3}f_{\ell}+a_{4}f_{\ell+1}
+P_{\ell+1}(f_{1},\dots,f_{\ell-1}),
\end{align*}
where 
\begin{align*}
a_{1}=&\frac{\langle f_{\ell},h_{\ell}\rangle}
{\langle f_{\ell},f_{\ell}\rangle},\ \ \,\quad
a_{2}=\frac{\langle f_{\ell+1},h_{\ell}\rangle}
{\langle f_{\ell+1},f_{\ell+1}\rangle}
=\langle f_{\ell+1},h_{\ell}\rangle,\\
a_{3}=&\frac{\langle f_{\ell},h_{\ell+1}\rangle}
{\langle f_{\ell},f_{\ell}\rangle},\quad
a_{4}=\frac{\langle f_{\ell+1},h_{\ell+1}\rangle}
{\langle f_{\ell+1},f_{\ell+1}\rangle}
=\langle f_{\ell+1},h_{\ell+1}\rangle.
\end{align*}
We have 
$\displaystyle a_{1}a_{4}-a_{2}a_{3}=\det\left[
\begin{matrix}
a_{1}&a_{2}\\
a_{3}&a_{4}
\end{matrix}
\right]\neq 0$ since $h_{1},\dots,h_{n}$ and $f_{1},\dots,f_{n}$ are 
systems of basic invariants. Hence we have the following. 
\begin{Them}\label{can-sys2}
Let $W$ be the irreducible finite reflection group 
of type $D_{n}$ with $n=2\ell$ ($\ell\geq 2$). 
Let $h_{1},\dots,h_{n}$ be a 
system of basic invariants with 
$\deg h_{i}=m_{i}$. Put $f_{\ell+1}=x_{1}\cdots x_{n}$. 
Then the normalization of a system 
\begin{align*}
\sum_{j=1}^{n}x_{j}\phi(\partial_{j}h_{1}),\dots,
%\sum_{j=1}^{n}x_{j}\phi(\partial_{j}h_{\ell-1}),
\sum_{j=1}^{n}x_{j}\phi\left(
\partial_{j}(bh_{\ell}-ah_{\ell+1})\right),
f_{\ell+1},
%\sum_{j=1}^{n}x_{j}\phi(\partial_{j}h_{\ell+1}),
\dots,\sum_{j=1}^{n}x_{j}\partial_{j}\phi(\partial_{j}h_{n})
\end{align*}
takes a canonical system of basic invariants, where
\begin{align*}
a=\langle f_{\ell+1},h_{\ell}\rangle
,\ \ \,\quad
b=\langle f_{\ell+1},h_{\ell+1}\rangle.
\end{align*}
\end{Them}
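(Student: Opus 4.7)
The plan is to work with a canonical system $f_{1},\dots,f_{n}$ that contains $f_{\ell+1}=x_{1}\cdots x_{n}$ as its middle element. Such a system exists by the discussion preceding the theorem: $df_{\ell+1}$ is an eigenvector of $\tilde\phi$, the orthogonality argument extracts a second eigenvector $df_{\ell}$ on the two-dimensional degree-$n$ component of $\Omega_{W}$, and Proposition~\ref{Proposition:eigenvectors} then supplies an orthogonal basis of eigenvectors in the remaining degrees. Once this canonical system is fixed, it suffices to show that each entry of the proposed list is a nonzero scalar multiple of the corresponding $f_{i}$, for then pairwise orthogonality is inherited from $\{f_{i}\}$ and normalization produces a canonical system.

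For indices $i\notin\{\ell,\ell+1\}$ the degree $m_{i}$ occurs exactly once, so one has the triangular expansion $h_{i}=c_{i}f_{i}+P_{i}(f_{1},\dots,f_{i-1})$ with $c_{i}\ne 0$. Since $dP_{i}\in(I\otimes_{\RB}V^{\ast})^{W}=\ker\tilde\phi$ by Proposition~\ref{Prop:property of phi}(3), applying $\tilde\phi\circ d$ gives $c_{i}\lambda_{i}\,df_{i}$, where $\lambda_{i}\ne 0$ is the eigenvalue of $\tilde\phi$ on $df_{i}$ (nonzero because $\tilde\phi|_{\Omega_{W}}$ is an isomorphism). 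Composing with $\varepsilon$ yields the multiple $c_{i}\lambda_{i}m_{i}f_{i}$, as required.

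For the coincident degrees $m_{\ell}=m_{\ell+1}=n$, the same expansion gives
\begin{align*}
h_{\ell}=a_{1}f_{\ell}+a_{2}f_{\ell+1}+P_{\ell},\qquad h_{\ell+1}=a_{3}f_{\ell}+a_{4}f_{\ell+1}+P_{\ell+1},
\end{align*}
with $P_{\ell},P_{\ell+1}$ polynomials in $f_{1},\dots,f_{\ell-1}$, $a=a_{2}$, $b=a_{4}$, and $\det[a_{j\,k}]\ne 0$ by algebraic independence. The crucial observation is the identity
\begin{align*}
bh_{\ell}-ah_{\ell+1}=(a_{1}a_{4}-a_{2}a_{3})f_{\ell}+(bP_{\ell}-aP_{\ell+1}),
\end{align*}
in which the $f_{\ell+1}$ coefficient cancels. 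Applying $\tilde\phi\circ d$ annihilates the polynomial in $f_{1},\dots,f_{\ell-1}$ and leaves a nonzero multiple of $df_{\ell}$; passing through $\varepsilon$ then produces a nonzero multiple of $f_{\ell}$. The middle slot is $f_{\ell+1}$ itself, so the full list is a sequence of nonzero scalar multiples of $f_{1},\dots,f_{n}$.

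The argument is essentially formal given the three ingredients already in hand: the kernel description $\ker\tilde\phi=(I\otimes_{\RB}V^{\ast})^{W}$, the eigenvector property of $df_{\ell}$ and $df_{\ell+1}$, and the triangular expansion of the $h_{i}$'s in the $f_{i}$'s. The one point requiring care, and the place where a sign or scalar could slip in, is the precise choice of $a$ and $b$: one must remember the normalization $\langle f_{\ell+1},f_{\ell+1}\rangle=1$, so that $a=\langle f_{\ell+1},h_{\ell}\rangle=a_{2}$ and $b=\langle f_{\ell+1},h_{\ell+1}\rangle=a_{4}$ appear as bare inner products, and then verify that $bh_{\ell}-ah_{\ell+1}=a_{4}h_{\ell}-a_{2}h_{\ell+1}$ is exactly the combination whose $f_{\ell+1}$ component vanishes. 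Beyond this bookkeeping there is no genuine obstacle.
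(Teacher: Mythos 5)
Your proposal is correct and follows essentially the same route as the paper: the paper's ``proof'' is precisely the discussion preceding the theorem (the eigenvector property of $d(x_{1}\cdots x_{n})$, the orthogonality argument forcing $df_{\ell}$ to be an eigenvector, and the triangular expansion of $h_{\ell},h_{\ell+1}$ with $a=a_{2}$, $b=a_{4}$), combined with the mechanism of Theorem \ref{const-can-sys} for the indices of multiplicity one. Your explicit verification that the $f_{\ell+1}$-coefficient of $bh_{\ell}-ah_{\ell+1}$ cancels is exactly the step the paper leaves implicit in ``Hence we have the following.''
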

\section*{Acknowledgements}
The authors thank Professor Akihito Wachi
and the referee for their helpful comments.

Norihiro Nakashima \\
%(email: naka\_n@math.sci.hokudai.ac.jp)
Department of Mathematics, \\
Graduate School of Science, \\
Hokkaido University, Sapporo, $060$-$0810$, Japan\\
\vspace{2mm}
\\
Shuhei Tsujie \\
%(email: tsujie@math.sci.hokudai.ac.jp)
Department of Mathematics, \\
Graduate School of Science, \\
Hokkaido University, Sapporo, $060$-$0810$, Japan\\
\end{document}